\documentclass[11pt,reqno,a4paper]{amsart}


\usepackage{array}
\usepackage{enumitem}
\usepackage{mathtools}
\usepackage{pgfplots}
\usepackage{soul}
\usepackage{bm} 
\usepackage{mathrsfs} 
\usepackage{pifont} 
\usepackage{graphicx} 
\usepackage{empheq} 

\usepackage{ulem} 

\usepackage{amsmath,amssymb,amsfonts}
\usepackage{tikz}
\usepackage{hyperref}
\usepackage{appendix}
\usepackage{scalerel}
\usepackage{graphicx}
\usepackage{subcaption}
\captionsetup[subfigure]{labelfont=rm}
\usepackage{amsthm}
\usepackage{multirow}
\usepackage{pdflscape}
\usepackage{afterpage}
\usepackage{capt-of} 
\usepackage{lipsum}
\usepackage{booktabs}
\usepackage{siunitx}
\usepackage{esvect}

\tikzset{decorated arrows/.style={
    postaction={
        decorate,
        decoration={
            markings,
            mark=between positions 0 and 1 step 15mm with {\arrow[black]{stealth};}
            }
        },
    }
}
 
\tikzset{decorated arrows2/.style={
    postaction={
        decorate,
        decoration={
            markings,
            mark=at position 15mm with {\arrow[black]{stealth};}
            }
        },
    }
}

\usepgfplotslibrary{colormaps}
\usetikzlibrary{arrows}



\addtolength{\textwidth}{2cm}
\addtolength{\textheight}{1cm}
\addtolength{\voffset}{-0.5cm}
\addtolength{\hoffset}{0.5cm}

\setlength{\oddsidemargin}{10mm}
\setlength{\evensidemargin}{1mm}


\makeatletter
\def\set@curr@file#1{%
  \begingroup
    \escapechar\m@ne
    \xdef\@curr@file{\expandafter\string\csname #1\endcsname}%
  \endgroup
}
\def\quote@name#1{"\quote@@name#1\@gobble""}
\def\quote@@name#1"{#1\quote@@name}
\def\unquote@name#1{\quote@@name#1\@gobble"}
\makeatother
\usepackage{graphics}




\theoremstyle{plain}
\newtheorem{thm}{Theorem}
\newtheorem{lem}{Lemma}

 \theoremstyle{definition}

\theoremstyle{remark}

\theoremstyle{plain}

\newcommand{\RR}{\mathbb{R}}

\newcommand{\dpt}{\displaystyle}

\newcommand{\RN}[1]{%
  \textup{\uppercase\expandafter{\romannumeral#1}}%
}

      


\author[J. P. S. M. de Carvalho]{Jo\~ao P. S. Maur\'icio de Carvalho\\
\\
\MakeLowercase{jp.carvalho@upt.pt} \smallskip \\ \medskip {\bf ORCID:} 0000-0001-7709-1631 \\
\\ 
Research on Economics, Management and Information Technologies, Prince Henry
Portucalense University \\ \medskip Rua Dr. Ant\'onio Bernardino de Almeida 541, Porto 4200-072, Portugal
}

\begin{document}



\subjclass[2010]{34D35, 37D45,37G15, 91D10, 91F99, 92B99}
\keywords{Social behaviour, Socioepidemiological model, Chaos, Strange attractor, Bifurcation analysis} 

\title[Socioepidemiological dynamics]
{{\it Could society itself spiral into a Lorenz-like chaos \\ when facing an epidemic threat?}}

\date{\today}  
 
\begin{abstract}
Understanding how societies react to epidemic threats requires more than tracking infection curves. Public perception, collective memory and behavioural adaptation interact through feedback loops that can amplify or suppress the spread of fear, vigilance and precaution. In this work we reinterpret the classical Lorenz system in a socioepidemic context, governed by nonlinear interactions between perceived infection, social transmission behaviour and memory of past risk. We provide a qualitative analysis of the model and show that small fluctuations in perception or behaviour can trigger transitions between stable, oscillatory and chaotic collective responses. These results suggest that social reactions to epidemics may evolve according to intrinsic dynamical rules, generating complex patterns of vigilance, fatigue and renewed concern that mirror the irregular rhythms observed in real outbreaks. Our findings highlight the importance of incorporating behavioural feedbacks into epidemic modeling and reveal how chaotic dynamics may arise not only from pathogens but from society itself.
\end{abstract}

\maketitle


\section{Introduction and background}

\noindent Mathematical models have long been crucial tools for understanding the spread and decline of infectious diseases among populations \cite{Brauer2017}. From the early works of Kermack and McKendrick in 1927 and 1932 \cite{KermackMcKendrick1927, KermackMcKendrick1932}, who developed the first compartmental model for infectious disease transmission (SIR model), to contemporary formulations, the evolution of epidemiological modeling has been a continuous effort to capture the biological transmission of diseases among individuals \cite{BrauerChavez2019, Hethcote2000, Cobey2020}.

\smallskip

\noindent However, as recent global crises such as COVID-19 have demonstrated, epidemics are not only biological events, but also deeply social ones \cite{Dougan2020}. Fear, often amplified by media coverage and social media, can spread faster than the virus itself, shaping how individuals assess danger, comply with restrictions and interact with each other. Memory, both personal and collective, can vanish more quickly than the emergence of biological immunity: as the urgency of past warnings fades, institutions return to routine and societies can become risk-insensitive and trivialize the problem. At the same time, the collective mindset oscillates between alarmism and apathy, alternating between periods of vigilance and fatigue \cite{Yuan2022}, discipline and denial \cite{Valganon2024}. Public concern, adherence to preventive measures and the persistent memory of imminent risk rise and fall at a rate that seems to be governed by its own laws. Can these recurring cycles of vigilance and negligence be understood as the result of more complex dynamics, similar to those that govern certain chaotic systems?

\smallskip

\noindent The theory behind dynamical systems provides a compelling lens through which to approach this question. Since Lorenz's discovery in 1963 \cite{Lorenz1963}, it has been known that simple systems of feedback equations generate complex patterns of behaviour that are unpredictable but internally consistent. The Lorenz attractor, born from equations designed to describe convection in the atmosphere, has become a symbol of how deterministic mechanisms can produce chaotic results \cite{Lorenz1963}. Its characteristic loops of rising and falling, approaching and escaping, seem almost like a metaphor for the cycles of chaotic social reaction experienced during epidemics. Could a dynamic similar to Lorenz's also emerge in the social structure itself, influencing how populations percept the risk of contagion, remember past experiences with diseases and adjust their preventive behaviours? {\it Could society itself spiral into a Lorenz-like chaos when facing an epidemic threat?} If the public perception of the epidemic danger shapes behaviour (e.g., wearing masks, social distancing or seeking vaccination) and these behaviours, in turn, change the perception of risk (reducing or amplifying it), then the consequent feedback loops can generate oscillations, instabilities or even chaotic social behaviour. Exploring this analogy between physical turbulence and social turbulence reveals a new frontier in epidemic modeling.

\smallskip

\noindent For all these reasons, it is possible that the reaction of society to contagion may not only be a passive reflection of infection curves, but also an autonomous dynamic process, with its own attractors, cycles and bifurcations, that is, a complex system in which collective behaviour evolves according to nonlinear laws similar to those of the virus itself.

\smallskip

\noindent Although the application of Lorenz-type systems to the dynamics of social perception during epidemics remains largely unexplored, several authors have extended his model into other fields, such as chemistry, ecology or population dynamics.



\medskip

\noindent \textbf{Literature.} In 2024, Shi {\it et al.}~\cite{Shi2024} introduced and tested new three- and five-parameter performance equations for fitting rotated and shifted Lorenz curves of plant size distributions, proving that the generalised equation provides superior and robust fits across a wide range of asymmetric size distributions. In 2025, Singh \cite{Singh2025} modified the classical Lorenz system to include the effects of an ultra-high intensity electromagnetic field, proposing a non-autonomous Lorenz-type Laser Field model with time-dependent parameters and showing, through stability analysis, Lyapunov exponents, bifurcation diagrams and basins of attraction, that the accelerated dynamics of electrons exhibit chaotic behaviour, including regime transitions, intermittent chaos and strong attractor deformations. Also in 2025, Yan {\it et al.}~\cite{Yan2025} developed a new chaotic system similar to Lorenz with cubic nonlinearity and demonstrated that its transient chaos can be exploited for detecting weak signals in bearing vibration data, significantly improving fault diagnosis in noisy environments.

\smallskip

\noindent Many other applications of the Lorenz system can be found in the references referenced in these studies. However, as far as we know, there are no studies that address applications of the Lorenz system to social behaviour.

\medskip

\noindent \textbf{Goals and novelty.} This work extends the classical Lorenz-type dynamic structure

\smallskip

\begin{center}
$\dot{X} = - \sigma X + \sigma Y\, , \quad \dot{Y} = - XZ + rX - Y\, , \quad \dot{Z} = XY - b Z$   \qquad  \cite{Lorenz1963}
\end{center}

\smallskip

\noindent to a socioepidemiological context, linking nonlinear dynamics to the collective social response to epidemics. Although the Lorenz system has been widely studied and applied in physics and other sciences, its application to social perception and behavioural adaptation during epidemics remains unexplored. We reinterpret Lorenz's equations using variables that represent the intensity of social transmission, perceived infection and collective memory, capturing how feedback between perception, memory and behaviour can lead to stable, oscillatory or chaotic regimes. A qualitative analysis is performed to characterise these transitions and provide new insights into the nonlinear nature of collective perception and social behaviour in epidemic contexts.


\medskip

\noindent \textbf{Structure.} In this article, we analyze a Lorenz-type socioepidemiological model that relates social transmission, perceived infection and collective memory during epidemics. Section \ref{model_model} presents the model, parameters and the main result, while in Section \ref{SECTION_3} we prove the theorem of the main result through a qualitative analysis of the system. Section \ref{sec_4} provides an interpretation of the different regimes present in the results of this paper. In Section \ref{str_str_sec}, we provide a social interpretation of the strange attractor that derives from the variation of $r_0$ . Finally, Section \ref{DISCUSSION} provides a discussion, where we draw an analogy between $r_0$ and the basic reproduction number $\mathcal{R}_0$, the limitations of the model and future work.



\section{Model dynamics and main result} \label{model_model} 

\noindent The proposed model organizes the socioepidemiological dynamics into three variables:

\medskip

\begin{itemize}
\item[$\boldsymbol{T(t)}${\bf :}] represents the intensity of social transmission, {\it i.e.} the general propensity of the population toward behaviours that help the spread of infection, such as mobility, social interaction or relaxation of preventive measures;

\medskip

\item[$\boldsymbol{I(t)}${\bf :}] expresses the perception of infection intensity, corresponding to the collective perception of epidemic risk (media attention, public concern or awareness of infection status);

\medskip

\item[$\boldsymbol{M(t)}${\bf :}] denotes social memory, which accumulates and holds the adherence of the population to protection norms and practices (e.g., precautionary habits, compliance with health measures or institutional responses).
\end{itemize}

\medskip

\noindent With respect to our model, we define ${{\Lambda} \subset \left\{ (\sigma, r_0, \beta) \in (\mathbb{R}^+)^3 \right\}}$ as the set of parameters. The parameter $\boldsymbol{\sigma}$ is the behavioural adjustment rate, governing how quickly the level of social transmission $T$ reacts to changes in perceived infection $I$. When $\sigma$ is large, the population adjusts almost immediately, tightening precautions when risk perception rises and relaxing them when the perceived threat diminishes. Accordingly, the first equation $\dot{T}$, captures this behavioural feedback, where social behaviour continuously aligns with perceived epidemic pressure. The second equation $\dot{I}$, governs the dynamics of perceived infection. The term $T(r_0 - M)$ expresses that perceived infection grows when social transmission is intense ($T$ is high) and collective response is weak ($M$ is low). The parameter $\boldsymbol{r_0}$ represents the infection potential or maximum susceptibility of the system, modulating how strongly behavioural exposure translates into perceived epidemic pressure. The negative term $-I$ models the natural fading of risk perception, as attention and concern decline in the absence of reinforcing events. The third equation $\dot{M}$, describes the evolution of social memory. The memory variable increases when both exposure ($T$) and perceived infection ($I$) are high (since direct experience of risk promotes learning and adherence to protective measures). However, this memory gradually decays at rate $\boldsymbol{\beta}$, reflecting collective forgetting, fatigue or resistance to sustained preventive efforts.

\smallskip

\noindent These coupled mechanisms create a web of reciprocal feedbacks: higher perceived infection reduces social transmission, diminished transmission lowers perceived risk and weakened memory allows transmission to rise again. This interplay can generate oscillatory or even chaotic dynamics, echoing the recurrent cycles of alarm, adaptation and relaxation often observed in real epidemic contexts. A schematic overview of these interactions is shown in Figure \ref{diagram_lorenz_1}.

\smallskip

\noindent The dynamical system that captures these interacting processes consists of the following nonlinear system of ODE in the variables $T$, $I$ and $M$, which vary in time $t\in \RR_0^+$:


\medskip

\begin{equation}
\label{modeloSIR}
\begin{array}{lcl}
\dot{X} = \mathcal{F}(X) \quad \Leftrightarrow \quad
\begin{cases}
&\dot{T} = \sigma\,(I - T) \\
\\
&\dot{I} = T\,(r_0 - M) - I \\
\\
& \dot{M} = TI - \beta M
\end{cases}
\end{array}
\end{equation}

\medskip

\noindent where 

\smallskip

$$
\begin{array}{rcl}
\medskip
\smallskip
\smallskip
X(t) &=& \left( T(t), I(t), M(t) \right), \\
\medskip
X(t_0) &\coloneqq& X_0 \,\,\, \text{is the initial condition}, \\
\dot{X} &=& \left(\dot{T}, \dot{I}, \dot{M} \right) \,\,\, = \,\,\, \dpt \left(\frac{\mathrm{d}T}{\mathrm{d}t},\frac{\mathrm{d}I}{\mathrm{d}t}, \frac{\mathrm{d}M}{\mathrm{d}t} \right) .
\end{array}
$$

\medskip

\noindent The vector field associated to \eqref{modeloSIR} will be denoted by $\mathcal{F}$ and its flow is $\phi \left(t, (T_0, I_0, M_0) \right)$, $t \in \mathbb{R}_0^+$ and $(T_0, I_0, M_0) \in (\mathbb{R}_0^+)^3$.

\begin{figure}[ht!]
\includegraphics[width=1.48 \textwidth]{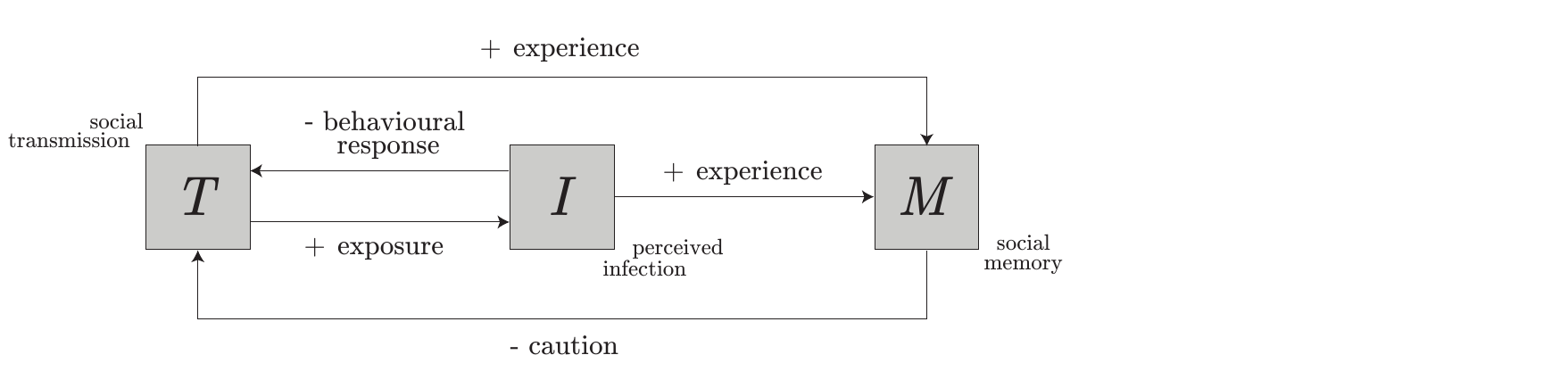}
\caption{Schematic representation of the socioepidemiological model \eqref{modeloSIR}. The variables $T$, $I$ and $M$ denote social transmission, perceived infection and social memory, respectively. Transmission increases perceived infection through exposure $(+)$, while perceived infection reduces transmission through behavioural response $(-)$. Both $T$ and $I$ contribute positively to the construction of social memory $(+)$, which later acts on transmission, promoting caution and adherence to norms $(-)$.}
\label{diagram_lorenz_1}
\end{figure} 

\noindent \textbf{Motivation.} The Lorenz system \cite{Lorenz1963}, known for its nonlinear and sensitive dynamics, inspired us to reinterpret socioepidemiological behaviour. We thus propose the Lorenz-type model \eqref{modeloSIR}, where:

\begin{itemize}
\item The variables $T$, $I$ and $M$ denote the intensity of social transmission, risk perception and collective memory, respectively;

\medskip

\item Nonlinear feedbacks among these variables capture oscillatory and complex social-epidemic dynamics;

\medskip

\item Following the Lorenz attractor approach, we examine invariant regions and the asymptotic behaviour of \eqref{modeloSIR}.
\end{itemize}

\medskip

\noindent \textbf{Hypotheses.} 
The proposed Lorenz-type socioepidemiological model \eqref{modeloSIR} follows general assumptions that ensure mathematical consistency and interpretability of its dynamics. Following standard approaches in dynamic systems theory, we adopt the following conditions:

\medskip

\begin{itemize}
\item[{\bf (H1)}] All parameters are positive;

\medskip

\item[{\bf (H2)}] The vector field defining system \eqref{modeloSIR} is continuously differentiable and therefore locally Lipschitz in $(T,I,M)$, ensuring local existence and uniqueness of solutions for any initial condition.
\end{itemize}

\medskip

\noindent Under these assumptions, the qualitative behaviour of the trajectories can be rigorously analyzed. 
In particular, the next result establishes that the system is globally well defined and dissipative, admitting a bounded positively invariant region where all trajectories eventually remain.

\begin{lem}[Dissipativity and global existence] \label{dissipative}
Assume {\bf (H1)} and {\bf (H2)}. 
Then, for any initial condition, the solution $(T(t),I(t),M(t))$ exists for all $t \ge 0$ and remains bounded. 
In particular, there exists a bounded closed ball in $\mathbb{R}^3$ that is positively invariant and absorbing.
\end{lem}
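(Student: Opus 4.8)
The plan is to run the classical energy/Lyapunov argument for Lorenz-type vector fields, adapted to the coefficients of \eqref{modeloSIR}. First I would use hypothesis \textbf{(H2)} together with the Picard--Lindel\"of theorem to obtain, for each initial condition $X_0=(T_0,I_0,M_0)$, a unique solution $X(t)$ on a maximal interval $[0,t_{\max})$. The content of the lemma is then to upgrade $t_{\max}$ to $+\infty$ and to exhibit an absorbing ball, and both follow from a single a priori estimate.

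The estimate comes from the quadratic function
\[
V(T,I,M) = T^2 + I^2 + (M-\sigma-r_0)^2 ,
\]
which is the squared Euclidean distance from $(T,I,M)$ to the point $(0,0,\sigma+r_0)$, so that its sublevel sets $\{V\le R\}$ are genuine closed balls in $\mathbb{R}^3$ (this is why I insist on equal weights on $T^2$ and $I^2$ rather than the unequal weights used in some presentations, which would only yield an ellipsoid). Differentiating $V$ along solutions of \eqref{modeloSIR}, the cubic contributions $-TIM$ coming from $\dot I$ and $+TIM$ coming from the shifted $\dot M$ term cancel exactly, and the $TI$ terms cancel as well because the shift is precisely $\sigma+r_0$; one is left with
\[
\tfrac12\,\dot V = -\sigma T^2 - I^2 - \beta M^2 + \beta(\sigma+r_0)\,M .
\]
Completing the square in $M$ and invoking \textbf{(H1)} gives
\[
\tfrac12\,\dot V = -\sigma T^2 - I^2 - \beta\!\left(M-\tfrac{\sigma+r_0}{2}\right)^2 + \tfrac{\beta(\sigma+r_0)^2}{4},
\]
so $\dot V<0$ outside the bounded ellipsoid $E=\{\sigma T^2+I^2+\beta(M-\tfrac{\sigma+r_0}{2})^2\le \tfrac{\beta(\sigma+r_0)^2}{4}\}$; dominating the negative-definite part by $-\alpha V$ for a suitable $\alpha=\alpha(\sigma,\beta)>0$ upgrades this to a differential inequality $\dot V\le -\alpha V+K$ with $K=K(\sigma,r_0,\beta)\ge 0$.

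To finish: since $\dot V<0$ off the compact set $E$, the function $t\mapsto V(X(t))$ is bounded by $\max\{V(X_0),\max_{E}V\}$ on all of $[0,t_{\max})$, hence $X(t)$ stays in a fixed bounded set; the standard continuation principle for ODEs then rules out finite-time blow-up, so $t_{\max}=+\infty$ and the solution is global. For the absorbing property, Gr\"onwall applied to $\dot V\le -\alpha V+K$ gives $V(X(t))\le e^{-\alpha t}V(X_0)+\tfrac{K}{\alpha}(1-e^{-\alpha t})$, so $\limsup_{t\to\infty}V(X(t))\le K/\alpha$; fixing any $R>K/\alpha$, the closed ball $\mathcal B_R=\{V\le R\}$ satisfies $\dot V\le -\alpha R+K<0$ on $\partial\mathcal B_R$, hence is positively invariant, and every trajectory enters $\mathcal B_R$ in finite time, i.e.\ $\mathcal B_R$ is absorbing. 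This $\mathcal B_R$ (a Euclidean ball centred at $(0,0,\sigma+r_0)$) is the asserted ball. The only genuinely delicate point is the bookkeeping in computing $\dot V$ --- specifically verifying that the sign-indefinite cubic term vanishes for this choice of $V$; once that cancellation is secured, the square completion, the Gr\"onwall step and the continuation argument are all routine.
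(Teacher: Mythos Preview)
Your proof is correct and follows essentially the same route as the paper: the identical Lyapunov function $V=T^2+I^2+(M-\sigma-r_0)^2$, the same cancellation of the cubic and bilinear terms in $\dot V$, and the same Gr\"onwall/continuation conclusion. The only cosmetic difference is that the paper bounds $-2\beta M(M-a)$ via a Young-type inequality in the variable $M-a$ (obtaining $\dot V\le -mV+\beta a^2$ with $m=\min\{2\sigma,2,\beta\}$), whereas you complete the square in $M$ first and then dominate by $-\alpha V$; both manipulations are equivalent and yield the same absorbing ball.
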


\begin{proof}
Since the vector field is continuously differentiable, it is locally Lipschitz in $(T,I,M)$. 
By the Picard--Lindel\"{o}f theorem \cite{Picard1890}, there exists a unique maximal solution $(T,I,M)$ defined on an interval $[0,t_{\max})$.

\smallskip

\noindent We now define the following Lyapunov function

$$
V(T,I,M) = T^2 + I^2 + (M - a)^2, \qquad \text{where} \quad a = \sigma + r_0.
$$

\medskip

\noindent Differentiating $V$ along the trajectories of system \eqref{modeloSIR} gives

$$
\dot{V} = 2T\dot{T} + 2I\dot{I} + 2(M - a)\dot{M}
         = -2\sigma T^2 - 2I^2 - 2\beta M(M - a).
$$

\medskip

\noindent Since $M(M - a) = (M - a)^2 + a(M - a)$, we can write

$$
\dot{V} = -2\sigma T^2 - 2I^2 - 2\beta (M - a)^2 - 2\beta a (M - a).
$$

\medskip

\noindent Using the inequality $-2uv \le u^2 + v^2$ with $u = \sqrt{\beta}(M - a)$ and $v = \sqrt{\beta}a$, we obtain

$$
-2\beta a(M - a) \le \beta (M - a)^2 + \beta a^2,
$$

\medskip

\noindent hence

$$
\dot{V} \le -2\sigma T^2 - 2I^2 - \beta (M - a)^2 + \beta a^2.
$$

\medskip

\noindent Let $m := \min\{2\sigma,\,2,\,\beta\}$ and note that $V = T^2 + I^2 + (M - a)^2$. Then,

$$
\dot{V} \le -mV + c, \qquad \text{where} \quad c = \beta a^2.
$$

\medskip

\noindent Now, integrating the inequality yields

$$
V(t) \le V(0)e^{-mt} + \frac{c}{m}\bigl(1 - e^{-mt}\bigr)
      \quad \Rightarrow \quad
      \sup_{t \ge 0} V(t) \le \max\Bigl\{V(0),\,\frac{c}{m}\Bigr\} < \infty.
$$

\medskip

\noindent Thus, $(T(t),I(t),M(t))$ remains bounded for all $t \ge 0$, which implies that no finite-time blow-up occurs and $t_{\max} = \infty$ (global existence).

\medskip

\noindent Since $\dot{V} < 0$ whenever $V > c/m$, the closed ball

$$
\mathcal{B} := \Bigl\{(T,I,M) : V(T,I,M) \le R^2 \Bigr\}, \qquad \text{where} \quad 
R^2 = \dfrac{c}{m} = \dfrac{\beta(\sigma + r_0)^2}{\min\{2\sigma,\,2,\,\beta\}},
$$

\medskip

\noindent is positively invariant and absorbing. Every trajectory eventually enters $\mathcal{B}$ and remains within it thereafter. The Lemma is proved.
\end{proof}

\noindent Let $P_0$ be the trivial equilibrium point and $P_e^{\pm}$ be the two equilibria of system \eqref{modeloSIR}. Let

\begin{eqnarray} \label{r_H}
\nonumber r_H = \dfrac{\sigma(\sigma+\beta+3)}{\sigma-\beta-1} \, .
\end{eqnarray}

\medskip

\noindent We have the following result:

\medskip


\begin{thm} \label{THM_1}
\noindent Consider system~\eqref{modeloSIR} with parameters $(\sigma,r_0,\beta)\in\Lambda\subset(\mathbb{R}^+)^3$. Let $P_0$ be the trivial equilibrium and let $P_e^+$ and $P_e^-$ be the two non-trivial equilibria. Hence,







\medskip

\begin{itemize}
\item if \,$r_0=1$, then system~\eqref{modeloSIR} undergoes a pitchfork bifurcation;

\medskip
\smallskip

\item if \,$r_0>1$, then two symmetry-related equilibria $P_e^{\pm}$ bifurcate from $P_0$.
\end{itemize}

\medskip
\smallskip

\noindent Assuming $\sigma>\beta+1$,

\medskip
\smallskip

\begin{itemize}
\item if \,$r_0=r_H$, then each equilibrium $P_e^{\pm}$ undergoes a Hopf bifurcation.
\end{itemize}


\end{thm}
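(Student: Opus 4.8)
The plan is to study the equilibria and their linearizations directly, using throughout the $\mathbb{Z}_{2}$-equivariance of $\mathcal{F}$ under the involution $\kappa\colon(T,I,M)\mapsto(-T,-I,M)$. First I would solve $\mathcal{F}(X)=0$: the first equation forces $I=T$, the second then gives $T\,(r_0-M-1)=0$, and the third gives $T^{2}=\beta M$. Hence either $T=0$, giving $P_0=(0,0,0)$, or $M=r_0-1$ and $T^{2}=\beta(r_0-1)$, giving $P_e^{\pm}=\bigl(\pm\sqrt{\beta(r_0-1)},\,\pm\sqrt{\beta(r_0-1)},\,r_0-1\bigr)$. These two points are real and distinct exactly when $r_0>1$, coalesce with $P_0$ at $r_0=1$, and are interchanged by $\kappa$; this proves the second bullet and already exhibits the square-root branching pattern characteristic of a pitchfork.

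For the pitchfork at $r_0=1$ I would linearize at $P_0$: the Jacobian is block-triangular, with eigenvalue $-\beta<0$ in the $M$-direction and the block $\left(\begin{smallmatrix}-\sigma&\sigma\\ r_0&-1\end{smallmatrix}\right)$ whose characteristic polynomial is $\lambda^{2}+(\sigma+1)\lambda+\sigma(1-r_0)$. At $r_0=1$ this block has a simple zero eigenvalue, with center eigenvector $(1,1,0)$, and the negative eigenvalue $-(\sigma+1)$; implicit differentiation at $\lambda=0$ gives $\left.\tfrac{d\lambda}{dr_0}\right|_{r_0=1}=\tfrac{\sigma}{\sigma+1}>0$, so the eigenvalue crosses the imaginary axis transversally. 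Together with the one-parameter family $P_0$ and the symmetric branch $P_e^{\pm}$ with $T\propto\pm\sqrt{r_0-1}$ found above, this is precisely a pitchfork bifurcation; a standard center-manifold reduction (on which $\kappa$ acts as $-1$, so the reduced equation is $\dot u=\mu(r_0)u+\gamma u^{3}+O(u^{5})$ with $\mu(1)=0$) confirms $\gamma\neq0$, consistent with the branch $P_e^{\pm}$ emerging supercritically for $r_0>1$.

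For the Hopf bifurcation, assume $\sigma>\beta+1$, which makes $r_H>1$ so that $P_e^{\pm}$ exist. At $P_e^{\pm}$ one has $r_0-M=1$, so the Jacobian is $\left(\begin{smallmatrix}-\sigma&\sigma&0\\ 1&-1&-x_{*}\\ x_{*}&x_{*}&-\beta\end{smallmatrix}\right)$ with $x_{*}^{2}=\beta(r_0-1)$, and a direct expansion gives the characteristic polynomial $p(\lambda)=\lambda^{3}+a_1\lambda^{2}+a_2\lambda+a_3$ with $a_1=\sigma+\beta+1$, $a_2=\beta(\sigma+r_0)$, $a_3=2\sigma\beta(r_0-1)$ — identical for both equilibria by $\kappa$-symmetry. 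By the Routh--Hurwitz/Hopf criterion, $p$ has a conjugate pair of purely imaginary roots $\pm i\omega$ with $\omega^{2}=a_2>0$, together with the simple real root $-a_1<0$, exactly when $a_1a_2=a_3$; solving this identity for $r_0$ gives precisely $r_0=r_H=\sigma(\sigma+\beta+3)/(\sigma-\beta-1)$. For transversality I would differentiate $p(\lambda(r_0))=0$ and take real parts at $\lambda=i\omega$: since $a_1'=0$, $a_2'=\beta$, $a_3'=2\sigma\beta$, one gets $\left.\tfrac{d}{dr_0}\mathrm{Re}\,\lambda\right|_{r_0=r_H}=\tfrac{a_3'-a_1a_2'}{2(\omega^{2}+a_1^{2})}=\tfrac{\beta(\sigma-\beta-1)}{2(\omega^{2}+a_1^{2})}>0$, so each $P_e^{\pm}$ undergoes a Hopf bifurcation at $r_0=r_H$.

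The main obstacle is not conceptual but bookkeeping: carrying the nonlinear equilibrium coordinates cleanly through the $3\times 3$ Jacobian and the Routh--Hurwitz manipulation so that the identity $a_1a_2=a_3$ collapses to exactly the stated $r_H$, and — if one wants a sharper, nondegenerate conclusion (stability of the bifurcating cycle, sub- versus supercriticality) rather than merely the linear crossing conditions — evaluating the first Lyapunov coefficient at $P_e^{\pm}$, which is the lengthiest calculation. All remaining steps are elementary.
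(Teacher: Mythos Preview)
Your proposal is correct and follows essentially the same route as the paper: the same computation of equilibria and $\mathbb{Z}_2$-symmetry, the same block structure of $\mathcal{J}(P_0)$ for the pitchfork, and the same cubic $\lambda^{3}+a_1\lambda^{2}+a_2\lambda+a_3$ with $a_1=\sigma+\beta+1$, $a_2=\beta(\sigma+r_0)$, $a_3=2\sigma\beta(r_0-1)$ and the Routh--Hurwitz boundary $a_1a_2=a_3$ for the Hopf point. The only cosmetic difference is in the Hopf transversality check: the paper differentiates $\Delta(r_0)=a_1a_2-a_3$ to obtain $\Delta'(r_H)=\beta(\beta+1-\sigma)<0$, whereas you differentiate the characteristic polynomial directly to get $\left.\tfrac{d}{dr_0}\mathrm{Re}\,\lambda\right|_{r_0=r_H}=\tfrac{\beta(\sigma-\beta-1)}{2(\omega^{2}+a_1^{2})}>0$; these are equivalent, and your explicit eigenvalue-crossing checks (including $\left.\tfrac{d\lambda}{dr_0}\right|_{r_0=1}=\tfrac{\sigma}{\sigma+1}$ at the pitchfork) are in fact slightly more detailed than what the paper presents.
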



\noindent The proof of Theorem \ref{THM_1} is provided in Section \ref{SECTION_3}.

\medskip

\noindent \textbf{Remark.} Theorem~\ref{THM_1} describes local bifurcations of the equilibria but does not guarantee the existence of a strange attractor for all values of $r_0>r_H$. As in the classical Lorenz system, chaotic dynamics and Lorenz-like strange attractors may occur only for specific values of parameters. In this work, the evidence for chaotic behaviour is based on numerical simulations for particular values of the parameters (see Section~\ref{str_str_sec}).


\section{Qualitative analysis} \label{SECTION_3}

\noindent We derive the equilibria of the system and discuss their stability, as well as the bifurcations they undergo. The jacobian matrix of the vector field associated to \eqref{modeloSIR}, evaluated at a general point $P = (T,I,M) \in (\mathbb{R}_0^+)^3$, is given by:

\begin{eqnarray}
\label{jacob}
\mathcal{J}(P)=\left(\begin{array}{ccc}
- \sigma & \sigma & 0 \\ 
\\
r_0 - M & -1 & - T \\
\\
I & T & - \beta
\end{array}\right) \, .
\end{eqnarray}

\medskip

\noindent We compute the equilibria by founding the zeros from $\mathcal{F}(X^{\star}) = 0$ (see \eqref{modeloSIR}), {\it i.e.}

\begin{equation}
\begin{array}{lcl}
\begin{cases}
& \sigma \left(I^{\star} - T^{\star} \right) = 0\\
\\
&T^{\star} \left(r_0 - M^{\star} \right) - I^{\star} = 0  \\
\\
& T^{\star} I^{\star} - \beta M^{\star} = 0 \, ,
\end{cases} .
\end{array}
\label{1st_step}
\end{equation} 

\medskip

\noindent where $\left(T^{\star}, I^{\star}, M^{\star} \right)$ is a generic equilibrium point. From the first equation we know that:

\begin{equation}
\begin{array}{lcl}
I^{\star} = T^{\star} \, .
\end{array}
\label{2nd_step}
\end{equation} 

\medskip

\noindent If we use \eqref{2nd_step} in the second equation of \eqref{1st_step}, we have

\begin{eqnarray*}
T^{\star} \left(r_0 - M^{\star} \right) - T^{\star} = 0 \quad &\Leftrightarrow& \quad T^{\star} \left(r_0 - M^{\star} - 1 \right) = 0 \, .
\label{3nd_step}
\end{eqnarray*} 

\medskip

\noindent Now we face two cases:

\medskip

\begin{enumerate}
\item[1.] $T^{\star} = 0$, and then we have the trivial equilibrium point $P_0 = (0,0,0)$, which represents a socially inactive state in which there is no perception of infection, no social behaviour related to transmission and no collective memory or response. This corresponds to the absence of epidemic perception or social reaction among the population;

\medskip
\smallskip

\item[2.] $M^{\star} = r_0 -1$. 

\smallskip

\noindent Substituting this expression into the third equation of \eqref{1st_step}, we get:

$$
{T^{\star}}^2 = \beta(r_0 - 1) \quad \Leftrightarrow \quad T^{\star} = \pm \sqrt{\beta \left(r_0-1\right)} \, ,
$$

\medskip

\noindent and then we have two non-trivial symmetric equilibria 

\begin{eqnarray*}
\label{trivial_eq}
P_e^{\pm} = \left(T_e^{\pm}, I_e^{\pm}, M_e\right) = \left(\pm \alpha, \pm \alpha, r_0-1 \right) \, ,
\end{eqnarray*}

\medskip

\noindent where $\alpha = \sqrt{\beta \left(r_0 - 1\right)}$ and which exist only for $r_0 > 1$.

\medskip
\smallskip

\noindent Each point $P_e^{\pm}$ represents a socially active endemic state, where the population holds a persistent level of infection awareness ($I > 0$), transmission-related social behaviour ($T > 0$) and collective memory ($M > 0$). 

\smallskip

\noindent The symmetry of the signal means there are two possible ways the social response could go:

\medskip

\begin{itemize}
\item $P_e^{+}$ corresponds to a scenario in which social perception and behavioural response are aligned with the risk of infection (collective adherence to protective measures, sustained awareness and behavioural synchronization that neutralize transmission);

\medskip

\item $P_e^{-}$ corresponds to the opposite orientation, in which collective behaviour reinforces risk (denialism, fatigue or coordinated behaviour that favors transmission despite awareness).
\end{itemize}

\medskip

\noindent These two equilibria are mirror images under the $\mathbb{Z}_2$ symmetry

\smallskip

\begin{center}
$(T^{\star},I^{\star},M^{\star})\mapsto(-T^{\star},-I^{\star},M^{\star})$\,\, ,
\end{center}

\smallskip

\noindent representing alternative stable social regimes that can emerge after the loss of stability of the inactive state $P_0$ when $r_0$ crosses the critical value 1.
\end{enumerate}

\subsection{Trivial equilibrium stability}

\noindent We have the following result regarding to the stability of $P_0$:

\medskip

\begin{lem} \label{P0_stab}
With respect to system~\eqref{modeloSIR}, the following holds for $P_0$:

\medskip

\begin{itemize}
\item If \,$r_0 < 1$, then $P_0$ is a hyperbolic sink and a global attractor;

\medskip
\smallskip

\item If \,$r_0 > 1$, then $P_0$ is a saddle.
\end{itemize}
\end{lem}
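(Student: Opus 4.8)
The plan is to separate a linear (spectral) analysis at the origin from a global Lyapunov argument. First I would evaluate the Jacobian \eqref{jacob} at $P_0=(0,0,0)$, which is block-triangular: the $M$-row and $M$-column decouple, contributing the eigenvalue $-\beta$, while the remaining directions are governed by the $2\times 2$ block with diagonal entries $-\sigma,\,-1$ and off-diagonal entries $\sigma,\,r_0$. Its characteristic polynomial is $\lambda^{2}+(\sigma+1)\lambda+\sigma(1-r_0)=0$, with trace $-(\sigma+1)<0$ and determinant $\sigma(1-r_0)$. Hence: if $r_0<1$ the determinant is positive, so both roots have strictly negative real part, and together with $-\beta<0$ all three eigenvalues lie in the open left half-plane, so $P_0$ is a hyperbolic sink; if $r_0>1$ the determinant is negative, so the $2\times 2$ block has one positive and one negative real eigenvalue, and appending $-\beta<0$ yields a hyperbolic saddle (a two-dimensional stable and a one-dimensional unstable manifold). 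In both regimes hyperbolicity holds precisely because $r_0\neq 1$ keeps the spectrum off the imaginary axis (the degenerate case $r_0=1$ is exactly the pitchfork of Theorem \ref{THM_1}).

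For the global-attractor assertion when $r_0<1$, I would introduce the weighted quadratic Lyapunov function $V(T,I,M)=\sigma^{-1}T^{2}+I^{2}+M^{2}$, in the spirit of the classical sub-threshold Lorenz estimate. Differentiating along \eqref{modeloSIR}, the factor $\sigma^{-1}$ is chosen so that the two cubic terms $\pm 2TIM$ coming from $\dot I$ and $\dot M$ cancel, leaving
$$
\dot V=-2T^{2}+2(1+r_0)\,TI-2I^{2}-2\beta M^{2}.
$$
The quadratic form in $(T,I)$ --- the $2\times 2$ symmetric matrix with diagonal entries $-2$ and off-diagonal entries $1+r_0$ --- is negative definite iff $(1+r_0)^{2}<4$, which under {\bf (H1)} (i.e. $r_0>0$) is equivalent to $r_0<1$. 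Thus for $r_0<1$ there exists $\lambda>0$ with $\dot V\le-\lambda V$ on all of $\mathbb{R}^{3}$, whence $V(t)\le V(0)e^{-\lambda t}\to 0$; therefore every trajectory (in particular every trajectory issuing from $(\mathbb{R}_0^+)^3$) converges to $P_0$, so $P_0$ is a global attractor, consistently with the boundedness already provided by Lemma \ref{dissipative}.

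The routine parts are the two-by-two eigenvalue bookkeeping and the differentiation of $V$; the one step that requires a little care is guessing the correct weights in $V$. The weight $\sigma^{-1}$ on $T^{2}$ is dictated by the need to collapse the linear couplings into the single term $2(1+r_0)TI$, and the equal weights on $I^{2}$ and $M^{2}$ are what make the $TIM$ contributions cancel; any other normalization leaves an indefinite cubic remainder and the argument breaks. A secondary point worth stating explicitly is that the sign condition $(1+r_0)^{2}<4$ reduces to the clean threshold $r_0<1$ only because of the standing positivity hypothesis --- otherwise one would also have to discard $r_0<-3$.
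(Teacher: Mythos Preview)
Your local spectral analysis at $P_0$ is essentially identical to the paper's: both exploit the block structure of $\mathcal{J}(P_0)$ to split off the eigenvalue $-\beta$ and then analyse the remaining $2\times 2$ block via trace and determinant (the paper additionally notes that the discriminant $(\sigma-1)^2+4\sigma r_0$ is positive so the roots are real, but this is not needed for the sink/saddle dichotomy).

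For the global-attractor claim when $r_0<1$, your approach differs genuinely from the paper's. The paper argues in one line: since the system is dissipative (Lemma~\ref{dissipative}) and $P_0$ is the only equilibrium for $r_0<1$, $P_0$ must be the global attractor. You instead produce an explicit weighted Lyapunov function $V=\sigma^{-1}T^{2}+I^{2}+M^{2}$ whose derivative along trajectories is a \emph{globally} negative-definite quadratic form, giving $\dot V\le -\lambda V$ and hence global exponential convergence. Your computation is correct: the weight $\sigma^{-1}$ makes the two cubic contributions $\mp 2TIM$ cancel, leaving $\dot V=-2T^{2}+2(1+r_0)TI-2I^{2}-2\beta M^{2}$, and the $(T,I)$-block is negative definite precisely when $(1+r_0)^{2}<4$, i.e.\ $r_0<1$ under {\bf (H1)}. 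What your argument buys is rigour and quantitative information: dissipativity together with uniqueness of the equilibrium does not, by itself, rule out periodic orbits or other nontrivial invariant sets inside the absorbing ball, so the paper's one-line deduction is somewhat informal, whereas your Lyapunov estimate settles global attraction directly and even supplies an explicit exponential decay rate.
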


\smallskip

\begin{proof}
\noindent At $P_0$, the matrix \eqref{jacob} takes the form

\begin{eqnarray*}
\label{jacob_P0}
\mathcal{J}(P_0)=\left(\begin{array}{ccc}
-\sigma  & \sigma & 0  \\ 
\\
r_0 & -1 & 0 \\
\\
0 & 0 & -\beta
\end{array}\right) \, .
\end{eqnarray*}

\medskip

\noindent The matrix $\mathcal{J}(P_0)$ has a block-diagonal structure with a $2\times2$ block on $(T,I)$ and a $1\times1$ block on $M$. Then we have

\medskip

$$
\det\!\big(\mathcal{J}(P_0)-\lambda I\big)
=\det\!\begin{pmatrix}
-\sigma-\lambda & \sigma\\
r_0 & -1-\lambda
\end{pmatrix}\,(-\beta-\lambda) \, ,
$$

\medskip

\noindent where $I$ is the identity matrix. Therefore $-\beta < 0$ is an eigenvalue and the remaining two eigenvalues are roots of

$$
p(\lambda)=\det\!\begin{pmatrix}
-\sigma-\lambda & \sigma\\
r_0 & -1-\lambda
\end{pmatrix}
=(\lambda+\sigma)(\lambda+1)-\sigma r_0
=\lambda^2+(\sigma+1)\lambda+\sigma(1-r_0).
$$

\medskip

\noindent Let $\lambda_{1,2}$ denote the roots of $p$. By Vi\`et's formulas,

$$
\lambda_1+\lambda_2=-(\sigma+1)<0,
\qquad
\lambda_1\lambda_2=\sigma(1-r_0).
$$

\medskip

\noindent Moreover, the discriminant

$$
\Delta=(\sigma+1)^2-4\sigma(1-r_0)=(\sigma-1)^2+4\sigma r_0>0 \, ,
$$

\medskip

\noindent hence $\lambda_{1,2}\in\mathbb{R}$ for all $\sigma,r_0>0$.

\medskip

\noindent If \,$r_0<1$, then \,$\lambda_1\lambda_2=\sigma(1-r_0)>0$ \,and \,$\lambda_1+\lambda_2<0$, hence \,$\lambda_1<0$ and $\lambda_2<0$. Together with \,$\lambda_3=-\beta<0$, all eigenvalues of $\mathcal{J}(P_0)$ have strictly negative real parts and $P_0$ is a hyperbolic sink (stable). In particular, since system \eqref{modeloSIR} is dissipative (from Lemma \ref{dissipative}), and $P_0$ is the unique equilibrium point, then $P_0$ is a global attractor for $r_0 < 1$.

\medskip

\noindent If \,$r_0>1$, then \,$\lambda_1\lambda_2=\sigma(1-r_0)<0$, so $p$ has one positive and one negative root. Since $\lambda_3 = -\beta < 0$, then $\mathcal{J}(P_0)$ has a positive eigenvalue and two others negative eigenvalues. Therefore $P_0$ is a saddle with one-dimensional unstable manifold.

\medskip

\noindent Hence the Lemma is proved.

\end{proof}

\subsection{Non-trivial equilibria stability}

\noindent For $\sigma > \beta + 1$, we have $r_H > 1$. Hence, the interval $(1,r_H)$ is non-empty. Concerning the two non-trivial equilibria, we establish the following result on their local stability:

\begin{lem} \label{Pe_stab}
\noindent With respect to system \eqref{modeloSIR}, the following holds for the non-trivial equilibria:

\medskip

\begin{itemize}
\item If \,$1 < r_0 < r_H$, then $P_e^{\pm}$ are stable;

\medskip
\smallskip

\item If \,$r_H < r_0$, then $P_e^{\pm}$ are unstable.
\end{itemize}
\end{lem}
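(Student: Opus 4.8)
The plan is to linearise the system at each non-trivial equilibrium and apply the Routh--Hurwitz criterion to the resulting characteristic cubic. First I would evaluate the Jacobian \eqref{jacob} at $P_e^{\pm}=(\pm\alpha,\pm\alpha,r_0-1)$ with $\alpha=\sqrt{\beta(r_0-1)}$. Since $M_e=r_0-1$, the entry $r_0-M$ collapses to $1$, so
\begin{equation*}
\mathcal{J}(P_e^{\pm})=\begin{pmatrix}-\sigma & \sigma & 0\\[2pt] 1 & -1 & \mp\alpha\\[2pt] \pm\alpha & \pm\alpha & -\beta\end{pmatrix}.
\end{equation*}
I would then note that $\mathcal{J}(P_e^{+})$ and $\mathcal{J}(P_e^{-})$ are conjugate through the involution $\mathrm{diag}(-1,-1,1)$ that realises the $\mathbb{Z}_2$ symmetry $(T,I,M)\mapsto(-T,-I,M)$ of the vector field; hence the two equilibria share the same characteristic polynomial and spectrum, and it suffices to treat one of them.

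Next I would compute the characteristic polynomial. Expanding $\det(\mathcal{J}(P_e^{\pm})-\lambda\,\mathrm{Id})$ along the first row and substituting $\alpha^2=\beta(r_0-1)$ yields, after simplification,
\begin{equation*}
p(\lambda)=\lambda^3+a_1\lambda^2+a_2\lambda+a_3,\qquad a_1=\sigma+\beta+1,\quad a_2=\beta(\sigma+r_0),\quad a_3=2\sigma\beta(r_0-1).
\end{equation*}
The only calculation of substance is this determinant: the $\mp\alpha$ off-diagonal entries enter the relevant $2\times2$ minors as the product $-\alpha^2$, so the signs conspire to make $p$ independent of the branch, consistent with the symmetry observation above.

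Finally I would invoke the Routh--Hurwitz test for a cubic with positive leading coefficient: all roots lie in the open left half-plane if and only if $a_1>0$, $a_3>0$ and $a_1a_2-a_3>0$. Condition $a_1>0$ is immediate from \textbf{(H1)}, and $a_3=2\sigma\beta(r_0-1)>0$ holds exactly because we are in the existence regime $r_0>1$ for $P_e^{\pm}$. The decisive condition is $a_1a_2-a_3>0$; dividing by $\beta>0$ it becomes $(\sigma+\beta+1)(\sigma+r_0)>2\sigma(r_0-1)$, which rearranges to $\sigma(\sigma+\beta+3)>r_0(\sigma-\beta-1)$. Under the standing hypothesis $\sigma>\beta+1$ the factor $\sigma-\beta-1$ is strictly positive, so this is equivalent to $r_0<r_H$ with $r_H=\dfrac{\sigma(\sigma+\beta+3)}{\sigma-\beta-1}$. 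Hence for $1<r_0<r_H$ all three Routh--Hurwitz conditions hold and $P_e^{\pm}$ are asymptotically stable, whereas for $r_0>r_H$ the third-row entry $(a_1a_2-a_3)/a_1$ of the Routh array turns negative, producing two sign changes in its first column, i.e.\ a complex-conjugate pair of eigenvalues in the open right half-plane, so $P_e^{\pm}$ are unstable.

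The main obstacle is bookkeeping rather than conceptual: one must carry out the determinant expansion carefully so that the cancellations leading to the clean coefficients $a_1,a_2,a_3$ are correct, and one must phrase the instability half precisely --- the failure of $a_1a_2-a_3>0$ gives not merely loss of asymptotic stability but genuine instability, read off from the Routh array (equivalently, it is precisely the eigenvalue crossing at $r_0=r_H$ that underlies the Hopf bifurcation asserted in Theorem~\ref{THM_1}).
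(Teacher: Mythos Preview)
Your proposal is correct and follows essentially the same route as the paper: evaluate the Jacobian at $P_e^{\pm}$, compute the cubic characteristic polynomial with coefficients $a_1=\sigma+\beta+1$, $a_2=\beta(\sigma+r_0)$, $a_3=2\sigma\beta(r_0-1)$, and reduce the Routh--Hurwitz condition $a_1a_2>a_3$ to $r_0<r_H$. Your version is in fact slightly more careful than the paper's, in that you make explicit the $\mathbb{Z}_2$-conjugacy that forces the two equilibria to share the same spectrum, and you justify the instability conclusion for $r_0>r_H$ via the sign changes in the Routh array rather than simply asserting ``otherwise they are unstable''.
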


\smallskip

\begin{proof}
\noindent Evaluating $\mathcal{J}(P)$ (see \eqref{jacob}) at $P_e^{\pm}$ we have

\begin{eqnarray}
\label{jacob_Pe}
\mathcal{J}(P_e^{\pm})=\left(\begin{array}{ccc}
-\sigma & \ \sigma & 0\\
\\
1 & -1 & \mp\,\alpha\\
\\
\pm\,\alpha & \ \pm\,\alpha & -\beta
\end{array}\right) \, .
\end{eqnarray}

\medskip

\noindent A direct computation of $\det\left(\lambda I- J(P_e^{\pm})\right)$ leads to the following characteristic
polynomial

\begin{equation*}\label{eq:charpoly-Ppm}
\chi(\lambda)=\lambda^3 + a_1\,\lambda^2 + a_2\,\lambda + a_3,
\end{equation*}

\medskip

\noindent where $a_1=\sigma+\beta+1$, $a_2=\beta(\sigma+r_0)$ and $a_3=2\beta\sigma(r_0-1)$. According to the Routh--Hurwitz criterion for cubic polynomials, all roots of $\chi$ have negative real parts if and only if

$$
a_1>0,\quad a_2>0,\quad a_3>0 \quad \text{and}\quad a_1a_2>a_3.
$$

\medskip

\noindent For $r_0>1$ and $\sigma,\beta \in \mathbb{R}^+$, the first three inequalities are trivial: $a_1=\sigma+\beta+1>0$, $a_2=\beta(\sigma+r_0)>0$ and $a_3=2\beta\sigma(r_0-1)>0$. Hence the only non-trivial condition is precisely

$$
(\sigma+\beta+1)\,\beta(\sigma+r_0)\ >\ 2\beta\sigma\,(r_0-1) \, .
$$

\medskip

\noindent We can get this expression in order to $r_0$, {\it i.e.}

\begin{eqnarray*}
\nonumber && (\sigma+\beta+1)\beta(\sigma+r_0) > 2\beta\sigma(r_0-1) \\
\nonumber \\
\nonumber &\Leftrightarrow& (\sigma+\beta+1)(\sigma+r_0) > 2\sigma(r_0-1) \\
\nonumber \\
\nonumber &\Leftrightarrow& \sigma^2 + r_0 \sigma + \beta \sigma + r_0 \beta + \sigma + r_0  > 2 r_0 \sigma - 2 \sigma \\
\nonumber \\
\nonumber &\Leftrightarrow& \sigma^2 - r_0 \sigma + \beta \sigma + r_0 (\beta + 1) + \sigma + 2 \sigma > 0 \\
\nonumber \\
\nonumber &\Leftrightarrow& \sigma^2 + \beta \sigma + 3 \sigma + r_0 (\beta + 1 - \sigma) > 0 \\
\nonumber \\
\nonumber &\Leftrightarrow& r_0(\sigma - \beta - 1) < \sigma^2 + 3 \sigma + \beta \sigma \\
\nonumber \\
&\Leftrightarrow& r_0 < \dfrac{\sigma(\sigma + \beta + 3)}{\sigma - \beta - 1} = r_H \, . \label{hopf_bif}
\end{eqnarray*}

\medskip

\noindent Thus, if \,$\sigma>\beta+1$, then \,$r_0 < r_H$ \,and \,$P_e^{\pm}$ are both stable. Otherwise they are unstable. The Lemma is proved.
\end{proof}

\noindent The simulations in Figure \ref{phase_spaces} illustrate the dynamics described in Lemma \ref{P0_stab} and Lemma \ref{Pe_stab}.

\begin{figure*}[ht!]
\includegraphics[width=0.461 \textwidth]{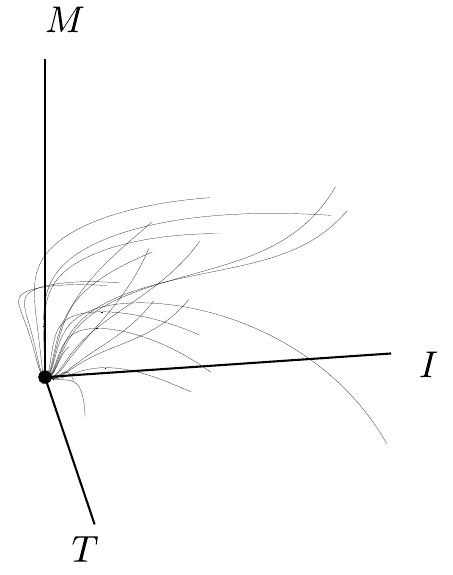} \qquad \,\,
\includegraphics[width=0.461 \textwidth]{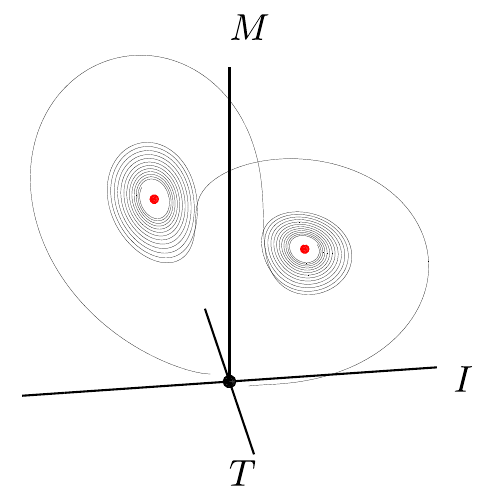}
\caption{Phase portraits of system \eqref{modeloSIR} under two parameter regimes. 
\textbf{Left:} Several initial conditions with $r_0 < 1$ 
(Parameters: $\sigma = 2$, $r_0 = 0.5$, $\beta = 1$). 
\textbf{Right:} Initial conditions near the origin with $1 < r_0 < r_H$ 
(Parameters: $\sigma = 10$, $r_0 = 20$, $\beta = 2.7$). 
Black dots denote the trivial equilibrium point $P_0$ (a sink on the left and a saddle on the right), 
while red dots denote the non-trivial equilibria $P_e^{\pm}$ (stable foci).}
\label{phase_spaces}
\end{figure*}


\subsection{Pitchfork bifurcation} \label{Pitchfork_sec}

\medskip

\noindent With respect to system~\eqref{modeloSIR}, the equilibrium $P_0=(0,0,0)$ exists for all $r_0>0$, and for $r_0>1$ there appear two non-trivial equilibria

$$
P_e^{\pm}=\big(\pm\sqrt{\beta(r_0-1)},\,\pm\sqrt{\beta(r_0-1)},\,r_0-1\big),
$$

\medskip

\noindent which merge into $P_0$ as $r_0 \to 1$.

\smallskip

\noindent The vector field is $\mathbb{Z}_2$-equivariant under $(T,I,M) \mapsto(-T,-I,M)$, so $P_e^-$ is the image of $P_e^+$ by the symmetry. From Lemma \ref{P0_stab} we know that $P_0$ is a hyperbolic sink for \,$r_0<1$ and a saddle for \,$r_0>1$. Moreover, no other equilibria exist for $r_0 \leq 1$. However, for $r_0>1$ the two symmetry-related equilibria bifurcate from $P_0$ with amplitude

$$
\alpha(r_0)=\sqrt{\beta(r_0-1)}\sim C\,\sqrt{r_0-1} \quad \, \text{as }\,\,\,r_0 \to 1^+ \, ,
$$

\medskip

\noindent where $C = \sqrt{\beta} > 0$. This scaling and the change of stability are those of a supercritical {\it pitchfork}. Hence, system~\eqref{modeloSIR} undergoes a ({\it supercritical}) {\it pitchfork bifurcation} at $r_0=1$, from which the equilibria $P_e^{\pm}$ emerge.

\subsection{Hopf bifurcation}

\medskip

\noindent We know from Lemma \ref{Pe_stab} that the characteristic polynomial of $\mathcal{J}(P_e^{\pm})$ (see \eqref{jacob_Pe}) is

$$
\chi(\lambda)=\lambda^3+a_1\lambda^2+a_2\lambda+a_3 \,,
$$

\medskip

\noindent where \,$a_1=\sigma+\beta+1$, \,$a_2=\beta(\sigma+r_0)$ \,and \,$a_3=2\beta\sigma(r_0-1)$. We also know that for \,$r_0>1$ we have $a_1,a_2,a_3>0$. Setting

\begin{eqnarray} \label{DELTA_DELTA}
\Delta(r_0)=a_1a_2-a_3=(\sigma+\beta+1)\,\beta(\sigma+r_0)-2\beta\sigma(r_0-1),
\end{eqnarray}

\medskip

\noindent the Routh-Hurwitz criterion shows that the stability boundary is $\Delta(r_0)=0$, {\it i.e.}

$$
r_0 = r_H = \dfrac{\sigma(\sigma+\beta+3)}{\sigma-\beta-1}\, .
$$

\medskip

\noindent At $r_0=r_H$, we look for purely imaginary roots by substituting $\lambda=i\omega$ (with $\omega\in\mathbb{R}$) into $\chi(\lambda)=0$. Expanding term by term, we get

\begin{eqnarray}
\nonumber \chi(i\omega)&=&(i\omega)^3+a_1(i\omega)^2+a_2(i\omega)+a_3 \\
\nonumber&=& (-i\omega^3) - a_1\omega^2 + i a_2\omega + a_3 \\
\nonumber&=& \left(-a_1\omega^2+a_3\right) + \left( -\omega^3+a_2\omega \right) i \, ,
\end{eqnarray}

\medskip

\noindent where \,$-a_1\omega^2+a_3$ \,is the real part and \,$-\omega^3+a_2\omega$ \,is the imaginary part. Setting $\chi(i\omega)=0$ this yields the two conditions


\begin{align}
-a_1\omega^2 + a_3 &= 0, \label{2_conditions_1}\\
-\omega^3 + a_2\omega &= 0. \label{2_conditions_2}
\end{align}

$$
-a_1\omega^2+a_3 =0 \qquad \text{and} \qquad -\omega^3+a_2\omega = 0 \, .
$$

\medskip

\noindent From \eqref{2_conditions_2} we factor $\omega$ and we get:

$$
\omega(-\omega^2+a_2)=0.
$$

\medskip

\noindent For a {\it Hopf bifurcation} we want $\omega \neq 0$. Then

$$
-\omega^2+a_2 = 0 \quad \Leftrightarrow \quad \omega^2=a_2.
$$

\medskip

\noindent Substituting this into \eqref{2_conditions_1} we have

$$
-a_1\omega^2+a_3=0
\quad \Leftrightarrow \quad
-a_1 a_2 + a_3 = 0
\quad \overset{\eqref{DELTA_DELTA}}{\Leftrightarrow} \quad
\Delta(r_H)=0 \, .
$$

\medskip

\noindent Therefore,

$$
\omega^2 = a_2(r_H) = \dfrac{a_3(r_H)}{a_1}>0\,.
$$

\medskip

\noindent For transversality, we compute

$$
\dfrac{\mathrm{d}\Delta(r_0)}{\mathrm{d}r_0} \Big|_{r_0=r_H}  =\dfrac{\mathrm{d}}{\mathrm{d}r_0}\left(a_1a_2-a_3\right)\Big|_{r_0=r_H}=a_1\,\beta-2\beta\sigma=\beta(\beta+1-\sigma),
$$

\medskip

\noindent which is strictly negative under the hypothesis $\sigma>\beta+1$. Therefore the eigenvalues cross the imaginary axis and each equilibrium $P_e^{\pm}$ undergoes a {\it Hopf bifurcation} at $r_0 = r_H$.

\noindent Hence, Theorem \ref{THM_1} is proved.



\section{Social interpretation of Theorem \ref{THM_1}} \label{sec_4}

\noindent In model \eqref{modeloSIR}, $r_0$ represents the overall infection potential or maximum susceptibility of the system. It measures the intensity with which behavioural exposure (social transmission $T$) translates into perceived epidemic pressure $I$. From a social perspective, $r_0$ can be interpreted as the degree of collective openness or vulnerability to the resurgence of the epidemic, which is affected by factors such as population mobility, media amplification and institutional preparedness. As $r_0$ varies, the system exhibits distinct qualitative regimes that correspond to different ways of collective behaviour:

\medskip

\begin{enumerate}
\item[\fbox{1.}] Low infection potential ($r_0 < 1$):
the system only admits the trivial equilibrium $P_0$, which is stable. From a social point of view, this means that the risk perceived by the population and the behavioural response remain minimal. The population maintains a low and constant level of concern, and preventive behaviour is weak or non-existent. Epidemic surveillance is not self-sustaining: any temporary alarm or increase in perception quickly fades, returning society to a stable (but complacent) baseline. This stage represents a society in which the epidemic threat is underestimated or effectively contained and no significant social oscillations are experienced.

\medskip

\item[\fbox{2.}] Critical transition ($r_0 = 1$):
the trivial equilibrium loses stability, giving rise to two stable non-trivial equilibria. In social terms, this is the point at which perception of the epidemic begins to strongly influence the behaviour of the population. People become dynamically responsive: small changes in risk perception can trigger noticeable adjustments in transmission behaviour and collective memory. This is the beginning of a ``responsive'' phase.

\medskip

\item[\fbox{3.}] Intermediate infection potential ($1 < r_0 < r_H)$:
At this stage, we are faced with the coexistence of two contrasting regimes, reflecting stable alternative states of collective behaviour: one of sustained vigilance (high perception and memory -- reduced transmission) and another of relaxed behaviour (low perception and memory -- increased transmission). The system tends towards an equilibrium that depends on its initial conditions, for example, historical and contextual factors that shape public perception at the outbreak of an epidemic. Empirically, we conclude that societies with similar epidemiological realities can react in very different ways, varying between caution and negligence.


\medskip

\item[\fbox{4.}] Infection potential and instability ($r_0 > r_H$):
As $r_0$ increases, the equilibria of the system become unstable and the dynamics evolve towards oscillatory regimes. In social contexts, this corresponds to recurring cycles of fear, adaptation and fatigue. Perception and behaviour feed into each other in a self-reinforcing cycle, leading to alternating waves of vigilance and relaxation. Society falls into a regime of collective instability, where minor perturbations or information shocks can lead to disproportionate and long-term effects on social behaviour.
\end{enumerate}

\smallskip

\noindent Figure \ref{illustration} illustrates the dynamics described in Theorem \ref{THM_1} and in the qualitative analysis of Section \ref{SECTION_3}. The strange attractor in this Figure is explained in the following Section \ref{str_str_sec}.

\smallskip
\smallskip

\begin{figure*}[ht!]
\includegraphics[width=1.1 \textwidth]{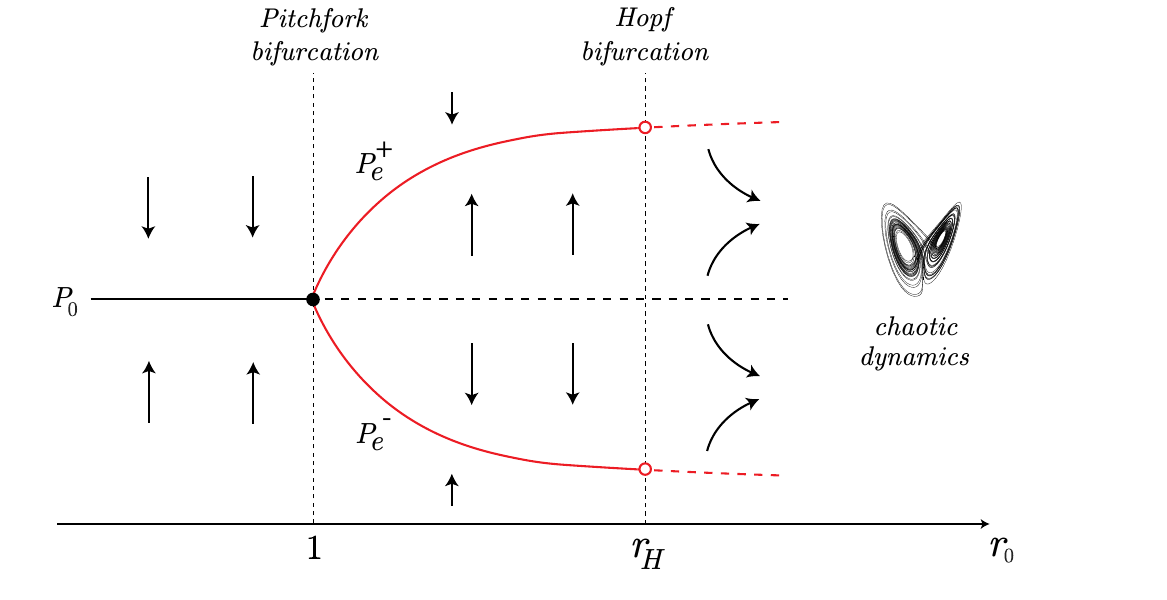}
\caption{Schematic bifurcations and stability of the equilibria for model \eqref{modeloSIR}. The trivial equilibrium point $P_0$ undergoes a {\it supercritical pitchfork bifurcation} at $r_0 = 1$ (black dot), changing its stability from a sink to a saddle and giving rise to two stable non-trivial equilibria $P_e^{\pm}$. For $r_0 = r_H$ (white dot), the equilibria $P_e^{\pm}$ undergo a {\it subcritical Hopf bifurcation} and lose their stability. For $r_0 > r_H$, the sketch of a Lorenz-like strange attractor indicates, in a purely schematic way, that for certain parameter values beyond this threshold the dynamics may become chaotic (as suggested by the numerical simulation in Figure \ref{str_atr} of Section \ref{str_str_sec}).} 
\label{illustration}
\end{figure*} 


\noindent The evolution from stability to instability as $r_0$ increases illustrates how collective behaviour, like physical-social systems, can switch from equilibrium to volatility as sensitivity and interdependence increase. This Lorenz-type socioepidemiological model provides a natural language for describing the emergence of complex social responses to epidemic threats, linking the mathematical dynamics of the model to the behavioural actions of individuals. In the following section, we show how Lorenz's strange attractor provides a natural framework for interpreting the chaotic behavioural patterns that may arise in a society facing an epidemic.


\section{The Strange Attractor as a metaphor for chaotic social dynamics} \label{str_str_sec}


\noindent When the potential for infection becomes sufficiently high (e.g., for parameter values such as $\sigma = 10$, $\beta = 8/3$ and $r_0 = 28$), numerical simulations of system \eqref{modeloSIR} suggest that nonlinear feedbacks between social transmission $T$, infection perception $I$ and social memory $M$ can give rise to chaotic fluctuations. From a mathematical point of view, these simulations suggest the presence of a strange attractor\footnote{A rigorous proof of the existence of a strange attractor for system~\eqref{modeloSIR} is beyond the scope of this work. We focus on the qualitative similarity with the classical Lorenz system.} (see Figure \ref{str_atr}): a bounded invariant set with sensitive dependence on initial conditions, in the sense that trajectories starting from nearby initial conditions quickly separate, remaining confined to the same region of phase space, instead of converging to an equilibrium or to periodic orbits of small amplitude arising from the Hopf bifurcation. From a social point of view, this chaotic regime reflects a population subject to rapid, irregular and unpredictable variations in its behaviour. High values of $r_0$ reinforce the sensitivity of risk perception ($I$) to exposure ($T$), while the memory variable ($M$), continuously affected by the forgetting rate $\beta$, is no longer able to stabilize responses. Small perturbations in public perception, such as isolated news stories, localised outbreaks, or changes in political discourse, can have disproportionate and lasting effects on the evolution of the social system. The variables oscillate in a similar way to the real cycles of alarm, adaptation, fatigue and overreaction: periods of strong caution (low $T$, high $I$, increase in $M$) can collapse suddenly into phases of greater relaxation (high $T$, fall in $I$, decrease in $M$), and the reverse transition can also occur, without regular periodicity. In this sense, the strange attractor not only describes the mathematical component of the model, but also constitutes a metaphor for the unstable and feedback-dependent nature of social behaviour in contexts of epidemic vulnerability (see Figure \ref{str_atr}).

\begin{figure*}[ht!]
\includegraphics[width=0.50 \textwidth]{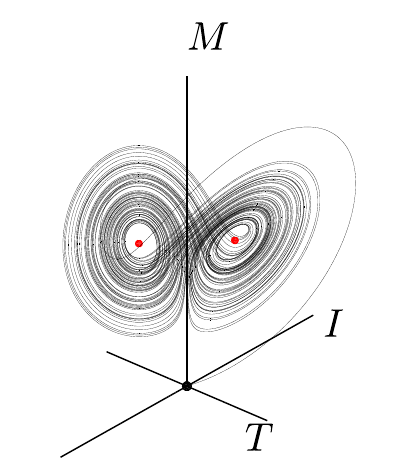}
\caption{Strange attractor generated by system \eqref{modeloSIR} for $\sigma = 10$, $\beta = 8/3$ and $r_0 = 28$. The geometry of the attractor reflects the chaotic interplay among social transmission $T$, perceived infection $I$ and social memory $M$, illustrating how their nonlinear feedbacks can give rise to irregular and unstable dynamics. The black dot denotes the trivial equilibrium point $P_0$, while the red dots denote the non-trivial equilibria $P_e^{\pm}$. All these equilibria are unstable.}
\label{str_atr}
\end{figure*}




\section{Discussion} \label{DISCUSSION}

\noindent We have proposed a Lorenz-type model to describe how social transmission $(T)$, perception of infection $(I)$ and collective memory $(M)$, interact during epidemic threats. Instead of just focusing on how a disease spreads, as in conventional epidemic models \cite{CarvalhoPinto2021, CarvalhoRodrigues2022,CarvalhoRodrigues2024}, we have reinterpreted the classic Lorenz structure in a socioepidemiological context, highlighting the feedback loops between behaviour, risk perception and memory of the past crisis.

\smallskip

\noindent Our qualitative analysis has shown that the system has three main dynamical states, depending on the value of $r_0$. For low values of $r_0$, social dynamics converge to an inactive state ($P_0$), where behaviour associated to transmission, perception of infection and memory, disappear (see Lemma \ref{P0_stab}). When $r_0$ crosses a first threshold ($r_0 = 1$), two non-trivial equilibria $P_e^{\pm}$ emerge, representing persistent but contrasting states of collective response (see Lemma \ref{Pe_stab} and Subsection \ref{Pitchfork_sec}). For higher values of $r_0$, these equilibria lose stability and the trajectories evolve into oscillatory and chaotic patterns, which can be interpreted as irregular cycles of vigilance, fatigue and renewed concern (see Section \ref{str_str_sec} and Figure \ref{str_atr}).

\smallskip

\noindent {\bf A social analogy: social amplification potential $r_0$ vs. basic reproduction number $\mathcal{R}_0$.} Beyond its role in defining the stability of the system, $r_0$ can be understood, in a broader sense, as a social equivalent of the {\it basic reproduction number} $\mathcal{R}_0$ in classical epidemiology. While $\mathcal{R}_0$ represents the threshold for the spread of an infection in a population \cite{Jones2007, Li2011} (if $\mathcal{R}_0 < 1$, the infection is eradicated; if $\mathcal{R}_0 > 1$, it persists \cite{CarvalhoRodriguesDoubleZero2023}) $r_0$ plays a similar role in the social context. Instead of measuring biological transmissibility, $r_0$ quantifies the system's maximum social potential to transform behavioural exposure into epidemic perception, that is, collective susceptibility to risk amplification. The critical value $r_0 = 1$ represents the threshold between two qualitatively distinct regimes. When $r_0 < 1$, the trivial equilibrium $P_0$ is stable and social dynamics tend toward indifference: perceptions of risk and preventive behaviours dissipate after the first stimulus. On the other hand, when $r_0 = 1$, the system undergoes a {\it pitchfork bifurcation}, from which two non-trivial equilibria $P_e^{\pm}$ emerge. For $r_0 > 1$, these equilibria represent persistent states of collective perception and social response. The qualitative changes in the system's dynamics as $r_0$ varies are illustrated in Figure~\ref{illustration}.

\smallskip

\noindent {\bf Limitations of the model and future work.} We have assumed a homogeneous population: all individuals are described by the same levels of social transmission, perceived infection and memory, sharing the same rates of response. This simplification ignores differences in terms of risk, access to information, academic and civic education, social norms or trust in institutions, which are fundamental in shaping real-world collective behaviour and awareness. Furthermore, the dynamics are purely deterministic and do not incorporate unexpected shocks caused by political changes, media events or simultaneous crises. These choices make the system easier to analyse but also mean that it should be viewed as a conceptual approximation of social reality, rather than a detailed representation of how multiple groups and random perturbations impact overall epidemic-related behaviour. The novelty of this research, which focuses on interpreting the dynamics of social behaviour through the theory of dynamical systems, makes it difficult to access previous results with which we could potentially compare it. Nevertheless, we hope that this work will be a first step towards a more rigorous understanding of how social perceptions evolve over time when the population is under pressure from an epidemic. Future work could simplify the assumption of a homogeneous population, considering different groups with distinct parameters or interaction patterns. This could help explain how polarisation and clustering of behaviours emerge during epidemics.

\newpage

\end{document}